\newtheorem{theorem}{\indent\bf Теорема}[section]
\newtheorem{lemma}[theorem]{\indent\bf Лемма}
\newtheorem{proposition}[theorem]{\indent\bf Предложение}
\newlength{\templengths}
\definecolor{mathcolor}{RGB}{0,0,0} 
\definecolor{argcolor} {RGB}{0,0,0} 
\definecolor{speccolor}{RGB}{0,0,0} 
\newcommand{\dismath} [1] {{\color{mathcolor}#1}}
\newcommand{\distext} [1] {{\color{speccolor}#1}}
\newcommand{\argument}[1] {{\color{argcolor!50!black}{#1}}}
\setlist{midsep}
\newcommand{\arrayitem}{\distext{\!\!\!\!\!\!\!\!\bullet}}
\newcommand{\arrayitemskip}     {\medskip}
\newcommand{\defnotion}       [1]       {\distext{\textit{#1\/}}}
\newcommand{\remember}        [1]       {}
\newcommand{\prop}{{\lang{P}}} 
\newcommand{\uclosure}        [1]       {\dismath{\bar{\forall}}#1} 
\newcommand{\set}             [1]       {\dismath{\left\{ \argument{#1} \right\}}}
\newcommand{\setc}            [2]       {\set{ \argument{#1} \mathrel{\dismath{:}} \argument{#2} }}
\newcommand{\otuple}          [1]       {\dismath{\left\langle\argument{#1}\right\rangle}}
\newcommand{\opair}           [2]       {\otuple{{#1},{#2}}}
\newcommand{\clmodels}        [0]   
                                    {\mathrel{\dismath{|\hspace{-1.25pt}{\models}}}}
\newcommand{\nameKFrame}      [1]       {\dismath{\mathfrak{#1}}}       
\newcommand{\nameKModel}      [1]       {\dismath{\mathfrak{#1}}}       
\newcommand{\nameClModel}     [1]       {\dismath{{#1}}}        
\newcommand{\kframe}          [1]       {\nameKFrame{#1}}       
\newcommand{\kmodel}          [1]       {\nameKModel{#1}}       
\newcommand{\cmodel}          [1]       {\nameClModel{#1}}      
\newcommand{\logic}           [1]       {\dismath{\mathbf{#1}}}         
\newcommand{\lang}            [1]       {\dismath{\mathcal{#1}}}        
\newcommand{\cclass}          [1]       {\dismath{\mathrm{#1}}}         
\newcommand{\numbers}         [1]       {\dismath{\mathds{#1}}}
\newcommand{\numNp}                     {\numbers{N}^+}
\newcommand{\otupleIs}        [2]       {\ensuremath \argument{#1} = \otuple{#2}}
\newcommand{\kframeIs}        [2]       {\otupleIs{\nameKFrame{#1}}{#2}}
\newcommand{\kfmodelIs}       [3]       {\otupleIs{\nameKModel{#1}}{\nameKFrame{#2},{#3}}}
\newcommand{\implication}               {\to}
  \newcommand{\imp} {\implication}
\newcommand{\conjunction}               {\wedge}   
  \newcommand{\con} {\conjunction}
\renewcommand{\iff}                     {\mathrel{\dismath{\Longleftrightarrow}}} 
\newcommand{\imply}                     {\mathrel{\dismath{\Longrightarrow}}}     
\newcommand{\bydef}                     {\mathrel{\dismath{\leftrightharpoons}}}  
\newcommand{\mref}{\@ifnextchar({\mref@i}{\mref@i({\Box},{p})}}
\def\mref@i(#1,#2){#1#2 \implication #2}
    \newcommand{\mrefp}{\@ifnextchar({\mrefp@i}{\mrefp@i({p})}}
    \def\mrefp@i(#1){\mref(\Box,#1)}
\newcommand{\FOref}{\@ifnextchar({\FOref@i}{\FOref@i({x},{P})}}
\def\FOref@i(#1,#2){\forall #1\,#2(#1,#1)}
    \newcommand{\FOrefp}{\@ifnextchar({\FOrefp@i}{\FOrefp@i({P})}}
    \def\FOrefp@i(#1){\FOref(x,#1)}
\newcommand{\FOrefi}{\@ifnextchar({\FOrefi@i}{\FOrefi@i({x},{P})}}
\def\FOrefi@i(#1,#2){\forall #1\,#1#2#1}
    \newcommand{\FOrefip}{\@ifnextchar({\FOrefip@i}{\FOrefip@i({P})}}
    \def\FOrefip@i(#1){\FOrefi(x,#1)}
\newcommand{\mtra}{\@ifnextchar({\mtra@i}{\mtra@i({\Box},{p})}}
\def\mtra@i(#1,#2){#1#2 \implication #1#1#2}
    \newcommand{\mtrap}{\@ifnextchar({\mtrap@i}{\mtrap@i({p})}}
    \def\mtrap@i(#1){\mtra(\Box,#1)}
\newcommand{\FOtra}{\@ifnextchar({\FOtra@i}{\FOtra@i({x},{y},{z},{P})}}
\def\FOtra@i(#1,#2,#3,#4){\forall #1\forall #2\forall #3\,(#4(#1,#2)\conjunction #4(#2,#3) \implication #4(#1,#3))}
    \newcommand{\FOtrap}{\@ifnextchar({\FOtrap@i}{\FOtrap@i({P})}}
    \def\FOtrap@i(#1){\FOtra(x,y,z,#1)}
\newcommand{\FOtrai}{\@ifnextchar({\FOtrai@i}{\FOtrai@i({x},{y},{z},{P})}}
\def\FOtrai@i(#1,#2,#3,#4){\forall #1\forall #2\forall #3\,(#1#4#2\conjunction #2#4#3 \implication #1#4#3)}
    \newcommand{\FOtraip}{\@ifnextchar({\FOtraip@i}{\FOtraip@i({P})}}
    \def\FOtraip@i(#1){\FOtrai(x,y,z,#1)}
\newcommand{\msym}{\@ifnextchar({\msym@i}{\msym@i({\Box},{\Diamond},{p})}}
\def\msym@i(#1,#2,#3){#3 \implication #1#2#3}
    \newcommand{\msymp}{\@ifnextchar({\msymp@i}{\msymp@i({p})}}
    \def\msymp@i(#1){\msym(\Box,\Diamond,#1)}
\newcommand{\FOsym}{\@ifnextchar({\FOsym@i}{\FOsym@i({x},{y},{P})}}
\def\FOsym@i(#1,#2,#3){\forall #1\forall #2\,(#3(#1,#2)\implication #3(#2,#1))}
    \newcommand{\FOsymp}{\@ifnextchar({\FOsymp@i}{\FOsymp@i({P})}}
    \def\FOsymp@i(#1){\FOsym(x,y,#1)}
\newcommand{\FOsymi}{\@ifnextchar({\FOsymi@i}{\FOsymi@i({x},{y},{P})}}
\def\FOsymi@i(#1,#2,#3){\forall #1\forall #2\,(#1#3#2 \implication #2#3#1)}
    \newcommand{\FOsymip}{\@ifnextchar({\FOsymip@i}{\FOsymip@i({P})}}
    \def\FOsymip@i(#1){\FOsymi(x,y,#1)}
\newcommand{\meuc}{\@ifnextchar({\meuc@i}{\meuc@i({\Box},{\Diamond},{p})}}
\def\meuc@i(#1,#2,#3){#2#3 \implication #1#2#3}
    \newcommand{\meucp}{\@ifnextchar({\meucp@i}{\meucp@i({p})}}
    \def\meucp@i(#1){\meuc(\Box,\Diamond,#1)}
\newcommand{\FOeuc}{\@ifnextchar({\FOeuc@i}{\FOeuc@i({x},{y},{z},{P})}}
\def\FOeuc@i(#1,#2,#3,#4){\forall #1\forall #2\forall #3\,(#4(#1,#2)\con #4(#1,#3)\implication #4(#2,#3))}
    \newcommand{\FOeucp}{\@ifnextchar({\FOeucp@i}{\FOeucp@i({P})}}
    \def\FOeucp@i(#1){\FOeuc(x,y,z,#1)}
\newcommand{\FOeuci}{\@ifnextchar({\FOeuci@i}{\FOeuci@i({x},{y},{z},{P})}}
\def\FOeuci@i(#1,#2,#3,#4){\forall #1\forall #2\forall #3\,(#1#4#2 \con #1#4#3 \implication #2#4#3)}
    \newcommand{\FOeucip}{\@ifnextchar({\FOeucip@i}{\FOeucip@i({P})}}
    \def\FOeucip@i(#1){\FOeuci(x,y,z,#1)}
\newcommand{\mser}{\@ifnextchar({\mser@i}{\mser@i({\Box},{\Diamond},{p})}}
\def\mser@i(#1,#2,#3){#1#3 \implication #2#3}
    \newcommand{\mserp}{\@ifnextchar({\mserp@i}{\mserp@i({p})}}
    \def\mserp@i(#1){\mser(\Box,\Diamond,#1)}
\newcommand{\FOser}{\@ifnextchar({\FOser@i}{\FOser@i({x},{y},{P})}}
\def\FOser@i(#1,#2,#3){\forall #1\exists #2\,#3(#1,#2)}
    \newcommand{\FOserp}{\@ifnextchar({\FOserp@i}{\FOserp@i({P})}}
    \def\FOserp@i(#1){\FOser(x,y,#1)}
\newcommand{\FOseri}{\@ifnextchar({\FOseri@i}{\FOseri@i({x},{y},{P})}}
\def\FOseri@i(#1,#2,#3){\forall #1\exists #2\,#1#3#2}
    \newcommand{\FOserip}{\@ifnextchar({\FOserip@i}{\FOserip@i({P})}}
    \def\FOserip@i(#1){\FOseri(x,y,#1)}
\newcommand{\mla}{\@ifnextchar({\mla@i}{\mla@i({\Box},{p})}}
\def\mla@i(#1,#2){#1(#1#2 \implication #2) \implication #1#2}
    \newcommand{\mlap}{\@ifnextchar({\mlap@i}{\mlap@i({p})}}
    \def\mlap@i(#1){\mla(\Box,#1)}
\newcommand{\mgrz}{\@ifnextchar({\mgrz@i}{\mgrz@i({\Box},{p})}}
\def\mgrz@i(#1,#2){#1(#1(#2 \implication #1#2) \implication #2) \implication #2}
    \newcommand{\mgrzp}{\@ifnextchar({\mgrzp@i}{\mgrzp@i({p})}}
    \def\mgrzp@i(#1){\mgrz(\Box,#1)}
\newcommand{\mwgrz}{\@ifnextchar({\mwgrz@i}{\mwgrz@i({\Box},{p})}}
\def\mwgrz@i(#1,#2){#1^+(#1(#2 \implication #1#2) \implication #2) \implication #2}
    \newcommand{\mwgrzp}{\@ifnextchar({\mwgrzp@i}{\mwgrzp@i({p})}}
    \def\mwgrzp@i(#1){\mwgrz(\Box,#1)}
\begin{document}

\title{Сложность константного фрагмента \\ слабой логики Гжегорчика\thanks{Работа И.\,А.~Агаджанян и М.\,Н.~Рыбакова поддержана программой <<Научный фонд НИУ ВШЭ>>, грант \mbox{21-04-027}.}}

\author{И.\,А.~Агаджанян$^1$, М.\,Н.~Рыбаков$^2$}
\affil{$^1$НИУ ВШЭ \\ $^2$ИППИ РАН, НИУ ВШЭ}

\date{}

\maketitle

    \section{Сложность логик в полном языке}


        Начнём с того, что приведём краткое обоснование $\cclass{PSPACE}$-трудности проблемы разрешения логик (и вообще любых множеств формул), заключённых между $\logic{K}$ и $\logic{GL}$, между $\logic{K}$ и $\logic{Grz}$, а также между $\logic{K}$ и $\logic{KTB}$; нам важны будут возникающие модальные формулы. Для этого покажем, как, используя конструкцию Р.\,Ладнера~\cite{Ladner77}, полиномиально свести проблему истинности булевых формул с кванторами к произвольной логике указанного класса.

        Считаем, что булевы формулы с кванторами строятся из пропозициональных переменных множества $\prop$ и константы~$\bot$ с помощью связок $\wedge$, $\vee$, $\to$ и кванторов по пропозициональным переменным $\forall p$ и $\exists p$, где $p\in\prop$. Пусть $\lang{QBF}$~--- множество всех булевых формул с кванторами.

        В формулах вида $\forall p\,\varphi$ и $\exists p\,\varphi$ формула $\varphi$ называется областью действия квантора $\forall p$ или $\exists p$, соответственно. Вхождение переменной $p$ называется замкнутым, если оно находится в области действия квантора по этой переменной; иначе оно называется свободным. Свободными переменными формулы называются переменные, имеющие свободное вхождение в эту формулу. Если формула не имеет свободных переменных, то она называется замкнутой.

        Универсальным замыканием булевой формулы с кванторами $\varphi$ называем формулу $\forall q_1\ldots\forall q_n\,\varphi$, где $q_1,\ldots,q_n$~--- все свободные переменные формулы~$\varphi$. Универсальное замыкание формулы $\varphi$ определено однозначно с точностью до порядка следования добавляемых перед $\varphi$ кванторов всеобщности; будем считать, что этот порядок соответствует порядку индексов свободных переменных и
        для универсального замыкания формулы $\varphi$ будем использовать обозначение~$\uclosure{\varphi}$.

        Истинность булевых формул с кванторами определяется обычно. Именно, пусть $\cmodel{M}\subseteq\prop$ и $\varphi$~--- булева формула с кванторами; множество $\cmodel{M}$ называем моделью. Отношение $\cmodel{M}\models\varphi$ задаётся согласно следующему рекурсивному определению:
        \[
          \begin{array}{clcl}
          \arrayitem & \mbox{$\cmodel{M}\models p_i$}
                     & \leftrightharpoons
                     & p_i\in \cmodel{M};
                     \arrayitemskip\\
          \arrayitem & \mbox{$\cmodel{M}\not\models \bot$;}
                     \arrayitemskip\\
          \arrayitem & \mbox{$\cmodel{M}\models \varphi' \wedge \varphi''$}
                     & \leftrightharpoons
                     & \mbox{$\cmodel{M} \models \varphi'$ $\phantom{\mbox{л}}$и$\phantom{\mbox{и}}$ $\cmodel{M}\models \varphi''$;}
                     \arrayitemskip\\
          \arrayitem & \mbox{$\cmodel{M}\models \varphi' \vee \varphi''$}
                     & \leftrightharpoons
                     & \mbox{$\cmodel{M} \models \varphi'$ или $\cmodel{M}\models \varphi''$;}
                     \arrayitemskip\\
          \arrayitem & \mbox{$\cmodel{M}\models \varphi' \to \varphi''$}
                     & \leftrightharpoons
                     & \mbox{$\cmodel{M} \not\models \varphi'$ или $\cmodel{M}\models \varphi''$;}
                     \arrayitemskip\\
          \arrayitem & \mbox{$\cmodel{M}\models \forall p_i\,\varphi'$}
                     & \leftrightharpoons
                     & \mbox{$\cmodel{M}\cup\{p_i\} \models \varphi'$ $\phantom{\mbox{л}}$и$\phantom{\mbox{и}}$ $\cmodel{M}\setminus\{p_i\}\models \varphi'$;}
                     \arrayitemskip\\
          \arrayitem & \mbox{$\cmodel{M}\models \exists p_i\,\varphi'$}
                     & \leftrightharpoons
                     & \mbox{$\cmodel{M}\cup\{p_i\} \models \varphi'$ или $\cmodel{M}\setminus\{p_i\}\models \varphi'$.}
        \end{array}
        \]
        Булеву формулу с кванторами называем тождественно истинной, если она истинна в любой модели.

        Заметим, что тождественная истинность формулы $\varphi$ равносильна тому, что её универсальное замыкание истинно в некоторой модели, а также тому, что её универсальное замыкание истинно в любой модели. Пусть
        $$
        \begin{array}{lcl}
        \logic{TQBF} & = & \{\varphi\in\lang{QBF} : \varnothing\models\uclosure{\varphi}\},
        \end{array}
        $$
        т.\,е. $\logic{TQBF}$~--- множество всех тождественно истинных булевых формул с кванторами.

        Будем говорить, что булева формула с кванторами $\varphi$ находится в {\defnotion{префиксной форме},\index{уяа@форма!префиксная} если $\varphi=Q_1q_1\ldots Q_nq_n\,\varphi'$, где $Q_1,\ldots,Q_n\in \{\forall,\exists\}$, $q_1,\ldots,q_n\in\prop$, а $\varphi'$~--- бескванторная формула.

        Известно, что задачи принадлежности формул множеству $\logic{TQBF}$ является $\cclass{PSPACE}$\nobreakdash-полной, см.~\cite{Stockmeyer-1987-1}; причём достаточно ограничиться рассмотрением замкнутых формул в префиксной форме. Используя этот факт, Р.\,Ладнер доказал $\cclass{PSPACE}$\nobreakdash-трудность логик $\logic{K}$, $\logic{T}$ и $\logic{S4}$~\cite{Ladner77}, а позднее его конструкция была обобщена и на другие модальные логики, см., например,~\cite{ZakharyaschevWolterChagrov-2001}.

        Для наших целей понадобится незначительная модификация конструкции Р.\,Ладнера; опишем её.

        Пусть $\varphi = Q_1p_1\ldots Q_np_n\,\varphi'$~--- замкнутая булева формула в префиксной форме. Пусть $q_0,\ldots,q_{n+1}$~--- пропозициональные переменные, не входящие в $\varphi$; содержательный смысл переменной $q_i$~--- в том, что мы <<раскрыли>> не менее чем $i$~кванторов в формуле~$\varphi$. Пусть $\varphi^\ast$~---
        \label{varphi_ast}
        \label{varphi:ast}
        конъюнкция следующих формул:
          \[
          \begin{array}{ll}
          \arrayitem
            & \displaystyle q_0\wedge\bigwedge\limits_{i=1}^n \neg p_i; \smallskip\\
          \arrayitem
            & \displaystyle
            \Box^{\leqslant n} \bigwedge\limits_{i = 1}^n ( q_i \imp q_{i-1}); \smallskip\\
          \arrayitem
            & \displaystyle
            \Box^{\leqslant n-1} \bigwedge\limits_{{Q}_i =
              \exists} ( q_{i-1}\wedge\neg q_{i} \imp \Diamond (q_{i}\wedge\neg q_{i+1}); \smallskip\\
          \arrayitem
            & \displaystyle
            \Box^{\leqslant n-1} \bigwedge\limits_{{Q}_i = \forall} (q_{i-1}\wedge\neg q_{i} \imp \Diamond (q_{i}\wedge\neg q_{i+1}
            \con p_{i}) \con \Diamond (q_{i}\wedge\neg q_{i+1} \con \neg p_{i})); \smallskip\\
          \arrayitem
            & \displaystyle
            \Box^{\leqslant n-1} \bigwedge\limits_{i = 1}^{n-1} ( q_i \imp
            \bigwedge\limits_{j \leqslant i} (\phantom{\neg}p_j \imp \Box (q_{i+1}\wedge\neg p_{n+1} \imp \phantom{\neg}p_j)) \con {} \\
            & \displaystyle \phantom{\Box^{\leqslant n-1} \bigwedge\limits_{i = 1}^{n-1} ( q_i \imp\,}
            \bigwedge\limits_{j \leqslant i} (\neg p_j \imp \Box
            (q_{i+1}\wedge\neg p_{n+1} \imp \neg p_j))); \smallskip\\
          \arrayitem
            & \displaystyle \Box^{n} (q_n\wedge\neg q_{n+1} \imp \varphi').
          \end{array}
          \]

            Формула $\varphi^\ast$ <<объясняет>>, как последовательно <<раскрываются>> кванторы формулы~$\varphi$, а также требует, чтобы после <<раскрытия>> кванторов формула~$\varphi'$ была истинной. Более точно, выполняется следующее утверждение.

            \begin{proposition}
            \label{prop:modal:complexity}
            Пусть $\logic{K}\subseteq L\subseteq\logic{GL}$
            или   $\logic{K}\subseteq L\subseteq\logic{Grz}$
            или   $\logic{K}\subseteq L\subseteq\logic{KTB}$.
            Тогда справедлива следующая эквивалентность:
            $$
            \begin{array}{lcl}
            \neg\varphi\in\logic{TQBF} & \iff & \neg\varphi^\ast\in L.
            \end{array}
            $$
            \end{proposition}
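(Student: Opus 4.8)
The plan is to prove the contrapositive form of the equivalence, namely that $\varphi$ is a true quantified Boolean formula if and only if $\varphi^\ast$ is satisfiable in some model based on a frame for $L$; the stated equivalence then follows, since $\neg\varphi\in\logic{TQBF}$ expresses falsity of $\varphi$ while $\neg\varphi^\ast\in L$ expresses $L$-unsatisfiability of $\varphi^\ast$ (using Kripke soundness of $L$ and Kripke completeness of $\logicK$ at the two ends of the sandwich). I would split the argument into two lemmas that together cover the whole interval $\logicK\subseteq L\subseteq X$ for each $X\in\{\logicGL,\logicGrz,\logicKTB\}$, exploiting that the logic inclusion $L\subseteq X$ reverses to the frame-class inclusion $\mathrm{Fr}(X)\subseteq\mathrm{Fr}(L)\subseteq\mathrm{Fr}(\logicK)$.

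First, a soundness lemma valid over arbitrary Kripke models (hence over $\logicK$): if $\mathfrak{M},w_0\models\varphi^\ast$ for any pointed model, then $\varphi$ is true. Starting from the root condition $q_0\con\bigwedge_i\neg p_i$, I would process the quantifier prefix $Q_1p_1\ldots Q_np_n$ by induction, using the level markers $q_i$ to carve a game tree out of $\mathfrak{M}$: the existential conjunct supplies one successor at the next level, the universal conjunct supplies both truth values of $p_i$, and the persistence conjunct propagates the already-fixed values of $p_1,\ldots,p_i$ forward, so that each branch reaches a level-$n$ point carrying a total assignment. The final conjunct $\Box^n(q_n\con\neg q_{n+1}\imp\varphi')$ then forces $\varphi'$ true under each such assignment, which is exactly a winning strategy witnessing that $\varphi$ is true. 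The role of the guards $q_i\con\neg q_{i+1}$ and $\neg q_{n+1}$ (and of the marker $p_{n+1}$) is that this extraction must not be fooled by reflexive loops, back-edges, or extra successors that an arbitrary model may contain; I expect the care needed to make the induction robust against such noise to be the first delicate point.

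Second, a realizability lemma: if $\varphi$ is true, then $\varphi^\ast$ is satisfiable on a frame for each of $\logicGL$, $\logicGrz$, $\logicKTB$. For this I would fix a winning strategy for $\varphi$ and build the associated finite game tree of depth $n$, labelling a node at level $i$ by the markers $q_0,\ldots,q_i$ and by the partial assignment to $p_1,\ldots,p_i$ it records; the matrix $\varphi'$ holds at the leaves by the strategy. From this single combinatorial object I would read off three concrete frames by taking, respectively, the irreflexive transitive closure (a finite transitive converse-wellfounded frame, hence a $\logicGL$-frame), the reflexive transitive closure (a finite partial order, hence a $\logicGrz$-frame), and the reflexive symmetric closure (a reflexive symmetric frame, hence a $\logicKTB$-frame), keeping the same valuation in each case. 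It then remains to verify the six conjuncts of $\varphi^\ast$ at the root of each of the three models.

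The main obstacle will be precisely this verification in the non-transitive symmetric case $\logicKTB$, and, to a lesser extent, in the transitive cases. Under transitivity a single $\Box$ sees all descendants rather than only the immediate ones, and under symmetry it additionally sees the parent and, with reflexivity, the node itself; in either situation the one-step intention behind the existence, persistence, and matrix conjuncts is recovered only because the markers $q_i$ form a strict gradient along depth, so that the guards $q_i\con\neg q_{i+1}$ pin down the intended level and the auxiliary variable $p_{n+1}$ blocks the spurious backward edges created by symmetry. I expect the bulk of the work to be the routine but fiddly check that, with these guards in place, each universally quantified conjunct $\Box^{\leqslant k}(\ldots)$ does hold at every point of the closure frame and is not falsified along a back-edge or a self-loop, thereby confirming that the one formula $\varphi^\ast$ serves uniformly across all three intervals.
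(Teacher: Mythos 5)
Your proposal follows essentially the same route as the paper's proof: one direction extracts a winning game tree for $\varphi$ from an arbitrary Kripke model satisfying $\varphi^\ast$ (which covers $\logic{K}$ via completeness), and the other builds the tree of assignments from a winning strategy and passes to its transitive, reflexive--transitive, and reflexive--symmetric closures to obtain frames for $\logic{GL}$, $\logic{Grz}$, and $\logic{KTB}$, with the same sandwiching of $L$ between $\logic{K}$ and the top logic. The constructions and the points you flag as delicate (the level markers $q_i$ and the guard $\neg p_{n+1}$ controlling back-edges and loops) coincide with those in the paper, so no further comment is needed.
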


            \begin{proof}
            Покажем, что
            \begin{itemize}
                \item
                если $\neg\varphi\not\in\logic{TQBF}$, то $\neg\varphi^\ast\not\in\logic{GL}$, $\neg\varphi^\ast\not\in\logic{Grz}$ и $\neg\varphi^\ast\not\in\logic{KTB}$;
                \item
                если $\neg\varphi^\ast\not\in\logic{K}$, то $\neg\varphi\not\in\logic{TQBF}$,
            \end{itemize}
            откуда и будет следовать указанная эквивалентность.

            Пусть $\neg\varphi\not\in\logic{TQBF}$. Тогда $\varphi$~--- истинная формула. Это означает, что мы можем построить кванторное дерево для~$\varphi$; опишем его построение. В корне этого дерева находится вершина, являющаяся классической моделью $\cmodel{M}_\varphi^0=\varnothing$. Пусть уже построена вершина $\cmodel{M}^s_\psi$ для формулы $\psi=Q_kp_k\ldots Q_np_n\,\varphi'$, где $k\in\{1,\ldots,n\}$.
            Если $Q_k=\forall$, то добавляем вершины $\cmodel{M}^0_{\psi'} = \cmodel{M}^s_\psi$ и $\cmodel{M}^1_{\psi'} = \cmodel{M}^s_\psi \cup \{p_k\}$, где $\psi'$~--- формула, полученная из~$\psi$ удалением квантора $Q_kp_k$.
            Если же $Q_k=\exists$, то поступаем следующим образом: добавляем какую-нибудь из вершин $\cmodel{M}^0_{\psi'}$ и~$\cmodel{M}^1_{\psi'}$, в которой истинна формула~$\psi'$.
            В каждую из добавленных вершин $\cmodel{M}^0_{\psi'}$ и~$\cmodel{M}^1_{\psi'}$ проводим дугу из вершины~$\cmodel{M}^s_\psi$. Построение кванторного дерева заканчивается, когда не остаётся висячих вершин, помеченных формулами, содержащими хотя бы один квантор.

            Заметим, что если вершина $\cmodel{M}_\psi^s$ оказалась в кванторном дереве для~$\varphi$, то $\cmodel{M}_\psi\models\psi$.

            Определим шкалу $\otupleIs{\kframe{F}}{W,R}$, взяв в качестве $W$ множество всех вершин кванторного дерева для~$\varphi$, а в качестве~$R$~--- множество всех дуг в этом дереве. Определим оценку~$v$:
            $$
            \begin{array}{lcl}
            v(p_k) & = & \{w\in W : p_k\in w\}.
            \end{array}
            $$
            Пусть теперь $R^{t}$, $R^{rt}$ и $R^{rs}$~--- транзитивное, рефлексивно-транзитивное и рефлексивно-симметричное замыкания отношения~$R$, соответственно. Несложно понять, что в этом случае шкалы
            $\otupleIs{\kframe{F}^{t}}{W,R^{t}}$, $\otupleIs{\kframe{F}^{rt}}{W,R^{rt}}$ и $\otupleIs{\kframe{F}^{rs}}{W,R^{rs}}$ являются шкалами логик $\logic{GL}$, $\logic{Grz}$ и $\logic{KTB}$, соответственно, причём в корне~$\cmodel{M}_\varphi^0$ каждой из этих шкал при оценке~$v$ истинна формула~$\varphi^\ast$. Следовательно, $\neg\varphi^\ast\not\in\logic{GL}$, $\neg\varphi^\ast\not\in\logic{Grz}$ и $\neg\varphi^\ast\not\in\logic{KTB}$.

            Пусть $\neg\varphi^\ast\not\in\logic{K}$. Тогда существует модель Крипке, в некотором мире которой истинна формула~$\varphi^\ast$. Но истинность формулы~$\varphi^\ast$ позволяет <<шаг за шагом>> извлечь из этой модели кванторное дерево для формулы~$\varphi$, подтверждающее её истинность, а значит, $\neg\varphi\not\in\logic{TQBF}$.
            \end{proof}

            \label{observation:modal:complexity:heredity}
            Заметим, что в предложении~\ref{prop:modal:complexity} оценка $v$, определённая на шкалах $\otupleIs{\kframe{F}^{t}}{W,R^{t}}$, $\otupleIs{\kframe{F}^{rt}}{W,R^{rt}}$, удовлетворяет следующему условию, которое назовём условием наследственности <<вверх>>:
            $$
            \begin{array}{lcl}
            \mbox{$w\in v(p_k)$ и $wRw'$} & \imply & w'\in v(p_k).
            \end{array}
            $$
            Это наблюдение станет важным, когда мы будем устанавливать нижние границы сложности для фрагментов логик, полных относительно классов шкал, в которых одним из необходимых свойств отношения достижимости является транзитивность.

\section{Интервал $[\logic{K},\logic{wGrz}]$}
    \label{subsection:K-K4}

Логика $\logic{K}$ определяется как множество модальных формул, истинных в классе всех шкал Крипке (см.~\cite{ChZ}). Слабая логика Гжегорчика $\logic{wGrz}$ (см.~\cite{Litak}) определяется как исчисление над $\logic{K}$ следующим образом: $\logic{wGrz} = \logic{K}\oplus \Box^+(\Box(p\to\Box p)\to p)\to p$, где $\oplus$ означает замыкание по modus ponens, правилу Гёделя и правилу подстановки, а $\Box^+\varphi = \varphi\wedge \Box\varphi$.

Будем использовать идеи Дж.\,Халперна~\cite{Halpern95}, а также их модификации, см.~\cite{RybakovChagrov-2002-2-rus, ChRyb03, Rybakov-2003-2-rus, Rybakov-2004-1-rus, Rybakov06, Rybakov-2007-1-rus, Rybakov07, Rybakov08, RShICTAC18, RShIGPL18, RShSaicsit18, RShIGPL19, RShJLC21a, ARSh2021, RShJLC22, RShTCS22}.

            Пусть $\varphi$~--- булева формула с кванторами, $p_1,\dots,p_n$~---
            все переменные, входящие в~$\varphi$. Пусть $\varphi^\ast$~---
            формула, построенная по $\varphi$ так, как это описано на
            стр.\,\pageref{varphi:ast}.  Напомним, что переменными формулы
            $\varphi^\ast$ являются $p_1,\dots,p_{n},q_0,\ldots,q_{n+1}$. Для единообразия обозначений будем считать, что $q_0=p_{n+1},\ldots,q_{n+1}=p_{2n+2}$, т.\,е. переменными формулы $\varphi^\ast$ являются $p_1,\dots,p_{2n+2}$.

            Для всякого $k\in\numNp$ положим
            $$
            \begin{array}{lcl}
            \alpha_k
            & = &
            \Box(\Diamond^k\Box\bot \wedge \neg
            \Diamond^{k+1}\Box\bot \to \Box(\Diamond \top \to \Diamond\Box\bot))
            \end{array}
            $$
            и обозначим через $\varphi_\alpha^\ast$ формулу, получающуюся из
            $\varphi^\ast$ подстановкой $\alpha_1,\dots,\alpha_{2n+2}$ вместо
            $p_1,\dots,p_{2n+2}$ соответственно.

            \begin{lemma}
            \label{lem_K4(0)form_equiv}
            \label{lem:K4(0)form:equiv}
            Формула $\varphi_\alpha^\ast$ строится по $\varphi^\ast$
            некоторым алгоритмом за полиномиальное время от $|\varphi^\ast|$, при
            этом
            $$
            \begin{array}{rcl}
            \mbox{$\varphi_\alpha^\ast$ $\logic{wGrz}$-выполнима}
            & \iff &
            \mbox{$\varphi^\ast$ $\logic{wGrz}$-выполнима.}
            \end{array}
            $$
            \end{lemma}

            \begin{proof}
            Заметим, что формулы  $\alpha_1,\dots,\alpha_{2n+2}$ строятся по
            $\varphi^\ast$ полиномиально, поскольку для
            некоторой константы~$c$
            $$
            |\alpha_m|
              ~\leqslant~ c\cdot m
              ~\leqslant~ c\cdot(2n + 2)
              ~\leqslant~ c\cdot|\varphi^\ast|.
            $$
            Следовательно,
            $$
            |\varphi_\alpha^\ast|
              ~\leqslant~ \max\{|\alpha_1|,\dots,|\alpha_{2n+2}|\}\cdot|\varphi^\ast|
              ~\leqslant~ c\cdot|\varphi^\ast|^2,
            $$
            из чего несложно заключить, что для построения формулы
            $\varphi_\alpha^\ast$ по $\varphi^\ast$ достаточно
            полиномиального времени от $|\varphi^\ast|$.

            Пусть $\varphi^\ast$ не является $\logic{wGrz}$-выполнимой. Тогда
            $\neg\varphi^\ast\in\logic{wGrz}$. Поскольку $\neg\varphi^\ast_\alpha$
            получена подстановкой из $\neg\varphi^\ast$, то
            $\neg\varphi^\ast_\alpha\in\logic{wGrz}$, и следовательно,
            $\varphi^\ast_\alpha$ не является $\logic{K4}$\nobreakdash-выполнимой.

            Пусть теперь $\varphi^\ast$~--- $\logic{wGrz}$-выполнимая формула. Это
            означает, что булева формула с кванторами $\varphi$ истинна.
            Следовательно, $\varphi^\ast$ истинна в некотором мире $w_0$
            модели $\kfmodelIs{M}{F}{v}$, определённой на
            $\logic{Grz}$-шкале $\kframeIs{F}{W,R}$ высоты $n\,{+}\,1$ (см.~доказательство
            предложения~\ref{prop:modal:complexity}). Несложно заметить, что оценка
            $v$ является наследственной: для всякой переменной $p_i$, где $0\leqslant
            i\leqslant 2n\,{+}\,2$, и для всяких $w',w''\in W$ таких, что
            $w'Rw''$ и $(\kmodel{M}, w') \models p_i$, выполнено
            $(\kmodel{M}, w'') \models p_i$.

            По модели $\kmodel{M}$ построим $\logic{wGrz}$-шкалу $\kframe{F}'=\opair{W'}{R'}$ и модель $\kmodel{M}' = \opair{\mathfrak{F}'}{v'}$, определённую на этой шкале, в некотором мире которой истинна формула~$\varphi^\ast_\alpha$.

\begin{figure}
  \centering
  \begin{tikzpicture}[scale=1.50]

    \coordinate (a30)    at ( 0.0, 3);
    \coordinate (a31)    at ( 0.0, 4);
    \coordinate (a32)    at ( 0.0, 5);
    \coordinate (a34)    at ( 0.0, 6);
    \coordinate (a35)    at ( 0.0, 7);
    \coordinate (b30)    at (-1.0, 4);
    \coordinate (c30)    at ( 0.0, 2);

    \coordinate (ld)     at (-3.2, 2.5);
    \coordinate (lu)     at (-3.2, 7.5);
    \coordinate (ru)     at ( 1.0, 7.5);
    \coordinate (rd)     at ( 1.0, 2.5);

    \draw [fill]     (a30)   circle [radius=2.0pt] ;
    \draw [fill]     (a31)   circle [radius=2.0pt] ;
    \draw [fill]     (a32)   circle [radius=2.0pt] ;
    \draw [fill]     (a35)   circle [radius=2.0pt] ;
    \draw []     (b30)   circle [radius=2.0pt] ;
    \draw []     (c30)   circle [radius=2.0pt] ;

    \begin{scope}[>=latex]

      \draw [->, shorten >=  2.75pt, shorten <= 2.75pt] (c30) -- (a30) ;
      \draw [->, shorten >=  2.75pt, shorten <= 2.75pt] (a30) -- (a31) ;
      \draw [->, shorten >=  2.75pt, shorten <= 2.75pt] (a31) -- (a32) ;
      \draw [->, shorten >= 11.75pt, shorten <= 2.75pt] (a32) -- (a34) ;
      \draw [->, shorten >=  2.75pt, shorten <= 7.75pt] (a34) -- (a35) ;
      \draw [->, shorten >=  2.75pt, shorten <= 2.75pt] (a30) -- (b30) ;

    \end{scope}

      \node []         at (a34)  {$\vdots$};
      \node [left=2pt] at (a35)  {$\Box\bot$};

      \node [right=2pt] at (a30)  {$a^m_0$};
      \node [right=2pt] at (a31)  {$a^m_1$};
      \node [right=2pt] at (a32)  {$a^m_2$};
      \node [right=2pt] at (a35)  {$a^m_{m}$};

      \node [left=2pt]  at (b30)  {$\Diamond\top\wedge\Box\Diamond\top$};
      \node [above=2pt] at (b30)  {$~~b^m$};
      \node [left=2pt]  at (c30)  {$\phantom{c^m}\neg\alpha_m$};
      \node [right=2pt] at (c30)  {$c^m$};

      \draw [dashed, color=black!50!] (ld) -- (lu) -- (ru) -- (rd) -- cycle ;

    \end{tikzpicture}
    \caption{Шкала $\kframe{F}_m$}
    \label{fig1}
  \end{figure}

            Сначала заметим, что для того, чтобы формула $\alpha_m$ опровергалась
            в некотором мире (транзитивной) модели, достаточно, чтобы
            из этого мира была достижима шкала, изображённая на рис.~\ref{fig1}
            (чёрные кружк\'и соответствуют иррефлексивным мирам, светлые~---
            рефлексивным, отношение достижимости транзитивно). Обозначим
            обведённую на рис.~\ref{fig1} шкалу через $\kframe{F}_m^{\phantom{l}}$, а всю
            шкалу, изображённую на рис.~\ref{fig1},~--- через $\kframe{F}^+_m$.
            Формальное описание шкал $\kframe{F}_m^{\phantom{l}}$ и $\kframe{F}^+_m$ таково:
            $\kframe{F}_m^{\phantom{l}}=\opair{W_m^{\phantom{l}}}{R_m^{\phantom{l}}}$, $\kframe{F}^+_m=\opair{W^+_m}{R^+_m}$, где
            $$
            \begin{array}{rcl}
            W_m & = & \set{b^m,a^m_0,a^m_1,\dots,a^m_m}; \\
            W^+_m & = & W_m\cup\set{c^m},
            \end{array}
            $$
            а $R_m^{\phantom{l}}$ и $R^+_m$~--- транзитивные замыкания отношений
            $$
            \set{\opair{a^m_0}{b^m}, \opair{b^m}{b^m}} \cup
            \setc{\opair{a^m_i}{a^m_{i+1}}}{0\leqslant i\leqslant m-1}
            $$
            и
            $$
            R_m\cup \set{\opair{c^m}{c^m}, \opair{c^m}{a^m_0}}
            $$
            соответственно.

            Ясно, что если $\alpha_m$ истинна в некотором мире транзитивной
            модели, то она также истинна во всех мирах, достижимых из данного
            (поскольку главной связкой формулы $\alpha_m$ является $\Box$). Заметим
            также, что если $k \ne m$, то $\kframe{F}^+_m\models\alpha_k$.
            Используя эти наблюдения, мы построим требуемую модель $\kmodel{M}'$.
            Именно, чтобы получить $\kmodel{M}'$, расширим шкалу $\kframe{F}$, сделав
            достижимой копию шкалы $\kframe{F}_m$ из каждого мира множества $W$, в
            котором в $\kmodel{M}$ опровергается переменная $p_m$.

            Опишем модель $\kmodel{M}'$ формально.

            Для каждого $w\in W$ обозначим через $\kframe{F}_m^w$ копию шкалы
            $\kframe{F}_m$, помеченную миром $w$: положим $\kframe{F}_m^w=
            \langle W_m^w, R_m^w\rangle$, где $W^w_m= W_m\times\{w\}$ и
            для всяких $x,y\in W_m$
            $$
            \begin{array}{rcl}
            \otuple{x,w} R_m^w \otuple{y,w}
            & \bydef &
            xR_my.
            \end{array}
            $$
            Положим
            $$
            \begin{array}{rcl}
            W' & = & \displaystyle W \cup \bigcup\setc{W_m^w}{\mbox{$1\leqslant
            m\leqslant 2n + 2$, $w\in W$, $(\kmodel{M},w) \not\models p_m$}}.
            \end{array}
            $$
            На множестве $W'$ определим отношение $\tilde R$:
            $$
            \begin{array}{rcl}
            w\tilde{R}w'
              & \bydef
              & \mbox{либо $w,w'\in W$ и $wRw'$,}
            \\
              &
              & \mbox{либо $w,w'\in W_m^u$ и $wR_m^uw'$,}
            \\
              &
              & \mbox{либо $w\in W$, $(\kmodel{M},w) \not\models p_k$
                 и $w'= \otuple{a_0^m,w}$}.
            \end{array}
            $$
            Обозначим через $R'$ транзитивное замыкание отношения $\tilde R$.
            Пусть $\kframe{F}'=\otuple{W',R}$ и пусть $v'$~---
            произвольная оценка пропозициональных переменных в мирах из множества
            $W'$. Положим $\kmodel{M}'=\otuple{\kframe{F}',v'}$.

            Заметим, что для каждых $w\in W'$ и $m\in\set{1,\ldots,n+1}$
            $$
            \begin{array}{rcl}
            (\kmodel{M}', w) \not\models \alpha_m
            & \iff &
            \mbox{$w\in W$ и $(\kmodel{M}, w) \not\models p_m$.}
            \end{array}
            \eqno{\mbox{$(\ast)$}}
            $$
            Действительно, для опровержения $\alpha_m$ в $w$ миру $w$ требуется видеть мир, из которого можно попасть в слепой мир за $m$ шагов, но не больше (а значит, он иррефлексивен), а также можно видеть бесконечную цепь миров (например, рефлексивный мир); это возможно, только если мы находимся в корне копии шкалы $\kframe{F}_m$, а значит, эта копия $\tilde R$\nobreakdash-достижима из $w$ или мира, $R$\nobreakdash-достижимого из~$w$, причём находящегося в~$W$. Учитывая условие наследственности <<вверх>> для оценки в $\kmodel{M}$, получаем, что $(\kmodel{M}, w) \not\models p_i$ по построению модели~$\kmodel{M}'$. Если же $w\in W$ и $(\kmodel{M}, w) \not\models p_i$, то $(\kmodel{M}, w) \not\models \alpha_i$ по построению~$\kmodel{M}'$.

            Для всякой формулы $\psi$ от переменных
            $p_1,\dots,p_{2n+2}$ обозначим через $\psi_\alpha$ формулу,
            полученную из $\psi$ подстановкой $\alpha_1, \dots , \alpha_{n+2}$
            вместо $p_1, \dots , p_{n+2}$ соответственно. Тогда для всякой
            подформулы $\psi$ бескванторной булевой формулы $\varphi'$
            и всякого мира $w \in W$ уровня $n$ имеет место следующая
            эквивалентность:
            $$
            \begin{array}{rcl}
            (\kmodel{M}', w) \models \psi_\alpha
            & \iff &
            (\kmodel{M}, w) \models \psi.
            \end{array}
            $$
            Справедливость данного обосновывается индукцией по построению~$\psi$:
            случай, когда $\psi = \bot$, тривиален; если $\psi = p_m$,
            то $\psi_\alpha = \alpha_m$, и требуемое получаем по~\mbox{$(\ast)$};
            индукционный шаг также тривиален (напомним, что $\psi$~--- безмодальная
            пропозициональная формула).

            Таким образом, если $w$~--- мир модели $\kmodel{M}$ уровня $n$, то
            $(\kmodel{M}',w) \models \varphi'_\alpha$. Заметим, что, согласно~\mbox{$(\ast)$}, формула
            $\alpha_{2n+1} \wedge\neg \alpha_{2n+2}$ истинна в некотором мире
            $w$ модели $\kmodel{M}'$ только если $w$ является миром уровня $n$ в
            $\kmodel{M}$, поскольку миры модели $\kmodel{M}$, в которых истинна
            формула $q_n \wedge\neg q_{n+1}$,~--- это в точности миры
            уровня~$n$. Следовательно, $(\kmodel{M}',w_0) \models
            \Box^+(\alpha_{2n+1}\wedge\neg\alpha_{2n+2}\to\varphi'_\alpha)$.

            Осталось проверить, что
            $$
            (\kmodel{M}', w_0) \models \alpha_{n+1} \wedge \neg \alpha_{n+2} \wedge
            \Box^+ A_\alpha \wedge
            \Box^+ B_\alpha \wedge
            \Box^+ C_\alpha \wedge
            \Box^+ D_\alpha \wedge
            \Box^+ E_\alpha.
            $$

            Так как $(\kmodel{M},w_0)\models q_0$, то, согласно~\mbox{$(\ast)$}, получаем $(\kmodel{M}',w_0)\models\alpha_{n+1}$ (напомним, что
            $p_{n+1}= q_0$).
            Поскольку $(\kmodel{M},w_0) \not\models q_1$, то, снова по~\mbox{$(\ast)$},
            формула $(\kmodel{M}',w_0)\models\neg\alpha_{n+2}$.

            Предположим, что $(\kmodel{M}',w_0)\not\models\Box^+ A_\alpha$.
            Тогда для некоторого $w\in W'$, достижимого из $w_0$, имеет место
            отношение $(\kmodel{M}',w)\not\models A_\alpha$, т.\,е.
            $$
            \begin{array}{c}
            \displaystyle
            (\kmodel{M}',w)\not\models
            \bigwedge\limits_{i=n+2}^{2n+2}(\alpha_i\to\alpha_{i-1}).
            \end{array}
            $$
            Заметим, что $w\in W$. Действительно, во всяком мире шкалы
            $\kframe{F}_k$ истинны все формулы $\alpha_i$, поэтому истинна и
            конъюнкция их импликаций друг к другу. Итак, $w\in W$, причём для
            некоторого $i\in\set{n+2,\dots,2n+2}$ имеет место
            отношение $(\kmodel{M}',w)\not\models\alpha_i\to\alpha_{i-1}$.
            Но тогда $(\kmodel{M}',w)\models\alpha_i$ и
            $(\kmodel{M}',w)\not\models\alpha_{i-1}$, а следовательно,
            $(\kmodel{M},w)\models p_i$ и
            $(\kmodel{M},w)\not\models p_{i-1}$, из чего несложно заключить,
            что $(\kmodel{M},w)\not\models A$, т.\,е.
            $(\kmodel{M},w_0)\not\models\Box^+A$. Получили противоречие,
            следовательно, $(\kmodel{M}',w_0)\models\Box^+ A_\alpha$.

            Предположим, что $(\kmodel{M}',w_0)\not\models\Box^+ B_\alpha$.
            Тогда для некоторого $w\in W'$, достижимого из $w_0$, имеет место
            отношение $(\kmodel{M}',w)\not\models B_\alpha$.
            %
            %
            Это означает, что для некоторого $i\in\set{1,\dots,n}$
            $$
            \begin{array}{l}
            (\kmodel{M}',w)\models\alpha_{n+1+i},
            \\
            (\kmodel{M}',w)\not\models
            \big(\alpha_i\to\Box(\alpha_{n+1+i}\to
            \alpha_i)\big) \wedge
            \big(\neg \alpha_i\to\Box(\alpha_{n+1+i}\wedge\neg
            \alpha_{2n+2}\to\neg \alpha_i)\big).
            \end{array}
            $$

            Пусть $(\kmodel{M}',w) \not\models \alpha_i \to
            \Box(\alpha_{n+1+i} \to \alpha_i)$. Тогда
            $(\kmodel{M}',w)\models\alpha_i$ и существует $w'\in W'$ такой, что
            $wR'w'$, $(\kmodel{M}',w') \models \alpha_{n+1+i}$,
            $(\kmodel{M}',w') \not\models \alpha_i$. Так как
            $(\kmodel{M}',w') \not\models \alpha_i$, то $w'\in W$, а
            значит, и $w\in W$, откуда следует, что
            $$
            \begin{array}{rcl}
            (\kmodel{M},w) \not\models
            q_i\to\big(p_i\to\Box(q_i\to
            p_i)\big)\wedge\big(\neg p_i\to\Box(q_i\wedge\neg
            q_{n+1}\to\neg p_i)\big),
            \end{array}
            $$
            а значит, $(\kmodel{M},w_0) \not\models\Box^+B$. Получили
            противоречие.

            Пусть $(\kmodel{M}',w) \not\models
            \neg \alpha_i\to\Box(\alpha_{n+1+i}\wedge\neg
            \alpha_{2n+2}\to\neg \alpha_i)$. Тогда
            $(\kmodel{M}',w) \not\models \alpha_i$ и существует $w'\in W'$
            такой, что $wR'w'$, $(\kmodel{M}',w')\models\alpha_{n+1+i}\wedge\neg
            \alpha_{2n+2}$,
            \linebreak[4]
            $(\kmodel{M}',w')\models\alpha_i$. Так как
            $(\kmodel{M}',w)\not\models \alpha_i$, то $w\in W$, а так как
            $(\kmodel{M}',w') \not\models \alpha_{2n+2}$, то $w'\in W$.
            Следовательно, $(\kmodel{M},w)\not\models p_i$,
            $(\kmodel{M},w)\models q_i$,
            $(\kmodel{M},w')\models q_i$,
            \linebreak
            $(\kmodel{M},w')\not\models q_{n+1}$,
            $(\kmodel{M},w')\models p_i$. Но тогда
            $$
            \begin{array}{rcl}
            (\kmodel{M},w) \not\models
            q_i\to\big(p_i\to\Box(q_i\to
            p_i)\big)\wedge\big(\neg p_i\to\Box(q_i\wedge\neg
            q_{n+1}\to\neg p_i)\big),
            \end{array}
            $$
            а значит, $(\kmodel{M},w_0) \not\models\Box^+B$. Снова получили
            противоречие.

            Значит, $(\kmodel{M}',w_0)\models\Box^+ B_\alpha$.

            Предположим, что $(\kmodel{M}',w_0) \not\models\Box^+ C_\alpha$.
            В этом случае для некоторого
            \linebreak
            $w\in W'$,
            достижимого из $w_0$, имеет
            место отношение $(\kmodel{M}',w)\not\models C_\alpha$.  Тогда для
            некоторого $i\in\set{0,\dots,n-1}$ такого, что $Q_{i+1}=
            \forall$, выполняются отношения $(\kmodel{M}',w)\models
            \alpha_{n+1+i}\wedge\neg\alpha_{n+2+i}$ и
            $(\kmodel{M}',w)\not\models
            \Diamond(\alpha_{n+2+i} \wedge \neg\alpha_{n+3+i} \wedge
            \alpha_{i+1})$.
            Так как $(\kmodel{M}',w)\not\models\alpha_{n+2+i}$, то $w\in
            W$, а так как $(\kmodel{M}',w)\models
            \alpha_{n+1+i}\wedge\neg\alpha_{n+2+i}$, то
            $(\kmodel{M},w)\models
            q_{i}\wedge\neg q_{i+1}$. Поскольку
            $(\kmodel{M},w_0)\models\Box^+C$, получаем, что
            $(\kmodel{M},w)\models
            q_{i}\wedge\neg q_{i+1} \to \Diamond(q_{i+1}\wedge\neg
            q_{i+2}\wedge p_i)$, а следовательно, существует
            мир $w'\in W$ такой,
            что $wRw'$ и $(\kmodel{M},w')\models
            q_{i+1}\wedge\neg q_{i+2}\wedge p_i$.
            По
            построению
            модели
            $\kmodel{M}'$ в этом случае имеет место отношение
            $(\kmodel{M}',w')\models
            \alpha_{n+2+i} \wedge \neg\alpha_{n+3+i} \wedge
            \alpha_{i+1}$. Так как $wRw'$, то $wR'w'$, а значит,
            $(\kmodel{M}',w)\models
            \Diamond(\alpha_{n+2+i} \wedge \neg\alpha_{n+3+i} \wedge
            \alpha_{i+1})$. Получили противоречие, следовательно,
            $(\kmodel{M}',w_0)\models\Box^+ C_\alpha$.

            Истинность в мире $w_0$ модели $\kmodel{M}'$ формул $\Box^+D_\alpha$ и
            $\Box^+E_\alpha$ доказывается аналогично тому, как это было сделано в
            случае формулы~$\Box^+C_\alpha$.

            В результате получаем, что $(\kmodel{M}', w_0) \models
            \varphi^\ast_\alpha$. Осталось заметить, что $\kframe{F}'$~--- шкала
            логики $\logic{wGrz}$, и значит,
            $\varphi^\ast_\alpha$ является $\logic{wGrz}$\nobreakdash-выполнимой.
            \end{proof}

            В качестве следствия сразу же получаем следующее утверждение.

            \begin{theorem}
            \label{th_PSPACE_K4}
            \label{th:PSPACE:K4}
            Проблема разрешения константного фрагмента логики $L\in [\logic{K},\logic{wGrz}]$ является\/ $\cclass{PSPACE}$\nobreakdash-трудной.
            \end{theorem}

            \begin{proof}
            Достаточно показать, что $\logic{TQBF}$ полиномиально сводится к проблеме $L$\nobreakdash-выполнимости константных формул. Без ограничений общности можем рассматривать только формулы вида $\neg Q_1p_1\ldots Q_np_n\varphi'$, где $\varphi'$~--- бескванторная формула от переменных $p_1,\ldots,p_n$. Действительно, булеву формулу с кванторами можно привести к префиксной нормальной форме, а затем взять её двойное отрицание, оставить внешнее отрицание, а внутреннее пронести через кванторную приставку.

            Пусть $\varphi = Q_1p_1\ldots Q_np_n\varphi'$. Покажем, что
            $$
            \begin{array}{rcl}
            \neg\varphi\in\logic{TQBF}
              & \iff
              & \neg\varphi^\ast_\alpha\in\logic{L}.
            \end{array}
            $$

            Пусть $\neg\varphi\in\logic{TQBF}$. Тогда, согласно предложению~\ref{prop:modal:complexity}, $\neg\varphi^\ast\in\logic{K}$. Логика $\logic{K}$ замкнута по правилу подстановки, поэтому $\neg\varphi^\ast_\alpha\in\logic{K}$. Поскольку $\logic{K}\subseteq L$, получаем, что $\neg\varphi^\ast_\alpha\in L$.

            Пусть $\neg\varphi\not\in\logic{TQBF}$. Тогда, согласно предложению~\ref{prop:modal:complexity}, $\neg\varphi^\ast\not\in\logic{wGrz}$. В этом случае, согласно лемме~\ref{lem_K4(0)form_equiv}, $\neg\varphi^\ast_\alpha\not\in\logic{wGrz}$. Поскольку $L\subseteq\logic{wGrz}$, получаем, что $\neg\varphi^\ast_\alpha\not\in L$.

            Осталось заметить, что функция, сопоставляющая формуле $\varphi$ формулу $\varphi^\ast_\alpha$, является полиномиально вычислимой.
            \end{proof}


\newcommand{\nosort}[1]{}\newcommand{\titlefont}{\rm}

\end{document}